\newtheorem{theorem}{Theorem}
\newtheorem{corollary}[theorem]{Corollary}
\newtheorem{proposition}[theorem]{Proposition}
\newtheorem{lemma}[theorem]{Lemma}
\newtheorem{remark}[theorem]{Remark}
\newtheorem{dfn}[theorem]{Definition}
\newcommand{\C}{\mathcal}
\newcommand{\N}{\mathbb{N}}
\newcommand{\R}{\mathbb{R}}
\newcommand{\w}{\omega}
\newcommand{\sh}{\sigma}
\newcommand{\seq}[1]{\langle #1\rangle}
\title{A characterization of $\omega$-limit sets in subshifts of Baire Space}
\author[J. Meddaugh]{Jonathan Meddaugh}
\address[J. Meddaugh]{Department of Mathematics, Baylor University, Waco, TX 76798--7328,USA}
\email{jonathan\_meddaugh@baylor.edu}
\author[B. E. Raines]{Brian E. Raines}
\address[B. E. Raines]{Department of Mathematics, Baylor University, Waco, TX 76798--7328,USA}
\email{brian\_raines@baylor.edu}
\subjclass[2010]{37B10, 37B20, 54H20}
\keywords{uncountable chaotic sets, entropy, chaotic pair, Baire Space, Li-Yorke chaos, subshifts}
\begin{document}

\begin{abstract}
	In this paper we consider the structure of $\omega$-limit sets in subshifts of Baire space.  We consider both \emph{subshifts of finite type} and \emph{subshifts of bounded type} and we demonstrate that many classical structure theorems for $\omega$-limit sets fail in this context. Nevertheless, we obtain characterizations of $\omega$-limit sets in subshift of finite types and of \emph{attracting} $\omega$-limit sets in subshifts of bounded type.
\end{abstract}

\maketitle

\section{Introduction}

For a continuous map $f:X\to X$ on a metric space and a point $x\in X$, the $\omega$-limit set of $x$ is the set
\[\omega(x)=\bigcap_{N\in\N}\overline{\{f^i(x): i\geq N\}},\]
i.e. the set of accumulation points of the forward orbit of $x$ under the action of $f$. It is immediately apparent that understanding the dynamics of the dynamical system $f:X\to X$ requires an understanding of its $\omega$-limit sets.

The $\omega$-limit sets of a dynamical system are well-studied in the context of compact metric domains $X$. Immediately from the definition, it follows that every $\omega$-limit set is closed (and hence compact) and strongly invariant.  Hirsch, Smith and Zhao demonstrated that every $\omega$-limit set in a compact dynamical system is \emph{internally chain transitive} \cite{Hirsch}. Barwell, Good, Knight and Raines demonstrated that in a number of systems, the converse also holds, i.e. a closed internally chain transitive set is the $\omega$-limit set of some point $x\in X$ \cite{BGKR-omega}. In particular, they demonstrate this fact for subshifts of finite type and for some maps on the interval. This property has also been verified in a few other settings such as in dendritic Julia sets and in a class of `circular' Julia sets, \cite{BMR-ToAppear} and \cite{BR-ToAppear}.  In \cite{MR}, the authors establish that the internal chain transitivity characterization holds in all compact dynamical systems with the \emph{shadowing property} (sometimes referred to as the \emph{pseudo-orbit tracing property}) and this result has been refined by Good and Meddaugh to carry this characterization over to the context of weaker variations of the shadowing property \cite{GoodMeddaugh-ICT}.

For systems $f:X\to X$ with $X$ not compact, less is known about $\omega$-limit sets. Again, it is immediate from the definition that the $\omega$-limit sets are closed and invariant, but may fail to be compact. The problem is additionally complicated by fact that $f$ need not be \emph{uniformly} continous in this setting.

As in \cite{BGKR-omega}, we begin the study of $\omega$-limit sets in the noncompact setting by considering shift spaces over countably infinite alphabets. For a countable set $\Lambda$ endowed with the discrete topology, we consider the dynamical system given by the \emph{shift map} on the space $\Lambda^\omega$ of infinite sequences in $\Lambda$ given by $\sigma\seq{x_n}=\seq{x_{n+1}}$. A subshift of $\Lambda^\omega$ is then a closed, invariant subsystem. As is the case with finite alphabets, subshifts over countable alphabets serve as useful models for a broad class of dynamical systems including countable state Markov systems \cite{Kitchens}.  But, unlike shift spaces over finite alphabets which are equivalent to maps on the Cantor middle-thirds set, these systems are not locally compact anywhere.  In fact they are equivalent to maps on the irrationals in $\mathbb{R}$. 

In the theory of shift spaces over finite alphabets, the \emph{subshifts of finite type} stand out as one of the most well-studied families. They are well understood in terms of their $\omega$-limit sets \cite{BGKR-omega} and their entropy \cite{Lind} among many other dynamical properties. Walters showed that these are precisely the subshifts with the shadowing property \cite{WaltersFiniteType} and Good and Meddaugh have demonstrated that these systems are fundamental to the study of the shadowing property in compact dynamical systems \cite{good-meddaugh}.

In this paper, we discuss subshifts of finite type over countable alphabet and characterize the $\omega$-limit sets in these systems. Additionally, we define subshifts of \emph{bounded} type and demonstrate that these systems are, in a sense, the more natural generalization of subshifts of finite type to the countable alphabet case -- they are precisely those subshifts with the shadowing property. This allows us to, in the spirit of \cite{BGKR-omega}, characterize a subclass of the $\omega$-limit sets (the \emph{attracting} $\omega$-limit sets) in terms of internal chain transitivity.

\section{Preliminaries}

Let $\w=\N\cup\{0\}$.  For any set $\Lambda$, let $\Lambda^\w$ be the set of all infinite words with alphabet $\Lambda$ with topology generated by the product topology taking $\Lambda$ to have the discrete topology.  Let $i,j\in \w$ with $i<j$.  Let $x\in \Lambda^\w$.  Define $x_{[i,j]}=x_ix_{i+1}\dots x_j$, and define $x_{(i,j)}, x_{[i,j)}$, and $x_{(i,j]}$ similarly. For a finite word $w=w_0\cdots w_{n-1}\in\Lambda^n$ and $k\in\omega$, define the \emph{cylinder set of $w_0\cdots w_{n-1}$ centered at $k$} to be the set of all $x\in\Lambda^\omega$ with $x_{[k,k+n)}=w$. It is a standard result that the collection of cylinder sets forms a basis for the topology on $\Lambda^\omega$, and in fact, it is enough to consider only the cylinder sets centered at $0$.

The space $\Lambda^\omega$ is easily seen to be metrizable and it is immediate that the metric given by
\[d(x,y)=\inf\{2^{-n}:x_i=y_i \textrm{ for all } i<n\}\]
is consistent with the topology on $\Lambda^\omega$.

If $\Lambda$ is finite and has at least two elements, then $\Lambda^\omega$ is a compact metric space which is homeomorphic to the Cantor set. In the event that $\Lambda=\omega$, $\Lambda^\omega$ is a non-compact, non-locally compact metric space that is homeomorphic to the irrationals in $\R$.  It is traditionally called \emph{the Baire space}, and it has many uses in descriptive set theory \cite{Descriptive-Set-Theory}.

For a fixed $\Lambda$, we define the \emph{full shift on $\Lambda$} to be the dynamical system $(\Lambda^\w, \sigma)$ where $\sh$ is the \emph{shift map} defined for every $x_0x_1x_2\cdots \in \Lambda^\w$ by $\sh(x_0x_1x_2\cdots)=x_1x_2\cdots$.
 It is easy to check that $\sigma$ is continuous and, in fact, uniformly continuous with respect to the metric $d$.

Let $\Gamma\subseteq \Lambda^\w$.  We call $\Gamma$ a \emph{subshift} of $(\Lambda^\w, \sh)$ provided $\Gamma$ is closed and $\sh$-invariant, i.e. $\sh(\Gamma)=\Gamma$.  Given a subshift $\Gamma$ of $\Lambda^\omega$, let $B_n(\Gamma)$ be the collection of all \emph{allowed words of length $n$}, i.e. $w=w_0\cdots w_{n-1}\in B_n(\Gamma)$ if, and only if, there is some point $x\in \Gamma$ and some $i\in \w$ such that $x_{[i,i+n)}=w_0\cdots w_{n-1}$, in this case we say that \emph{$x$ contains $w$ as a subword} or \emph{$w$ is a subword of $x$}, and if $i=0$ then we call $w$ an \emph{initial segment of $x$}.  Let \[B(\Gamma)=\bigcup_{n\in \N} B_n(\Gamma).\]  We call $B(\Gamma)$ the \emph{set of allowed words} for $\Gamma$.  Notice that, since $\Gamma$ is $\sigma$-invariant, $B(\Gamma)$ is equal to the set of initial segments.

A word $w=w_0\cdots w_{n-1}$ is called \emph{forbidden in $\Gamma$}, or simply \emph{forbidden} when the context is clear, provided $w\notin B(\Gamma)$.  We denote the set of all words forbidden in $\Gamma$ by $F(\Gamma)$, and for each $n\in \N$, we let $F_n(\Gamma)$ denote the words of length $n$ that are forbidden in $\Gamma$.

Let $\Gamma\subseteq \Lambda^\w$ be a subshift. A \emph{basis for the forbidden words of $\Gamma$} is a subset $\C F(\Gamma)$ of $F(\Gamma)$ such that for every $w\in F(\Gamma)$ there is some $w'\in \C F(\Gamma)$ such that $w$ contains $w'$ as a subword.  If $\Gamma$ has a basis for its forbidden words that is finite, we call $\Gamma$ a \emph{subshift of finite type (SFT)}. For $\Lambda$ finite, subshifts of finite type are well-studied \cite{Lind}.

For $\Lambda$ infinite, less is known about subshifts of finite type. In fact, it is not immediately clear that subshifts of finite type with infinite alphabet are the most natural analogue to subshifts of finite type with finite alphabet. In particular, the following notion is another possible analogue.

If $\Gamma$ has a basis for its forbidden words, $\C F(\Gamma)$, such that there is some $n\in \N$ and every element of $\C F(\Gamma)$ has length less than $n$, then we call $\Gamma$ a \emph{subshift of bounded type (SBT)}.  The following observation is immediate.

\begin{proposition}
	Let $\Gamma$ be a subshift of $\Lambda^\omega$. If $\Gamma$ is an SFT, then it is an SBT. If $\Lambda$ is finite and $\Gamma$ is an SBT, then it is an SFT.
\end{proposition}

Subshifts display many interesting dynamical properties. This paper focuses primarily on shadowing properties and their relationship with $\omega$-limit sets.

Let $(X,f)$ be a dynamical system on a metric space. For $\delta>0$, a sequence $\seq{x_i}$ in $X$ is a \emph{$\delta$-pseudo-orbit of $f$} provided that $d(f(x_i),x_{i+1})<\delta$ for all $i\in\omega$. A sequence $\seq{x_i}$ is an \emph{asymptotic pseudo-orbit of $f$} provided that $d(f(x_i),x_{i+1})$ limits to zero.

For a fixed $\epsilon>0$, we say that the sequence $\seq{x_i}$ \emph{$\epsilon$-shadows} $\seq{y_i}$ if $d(x_i,y_i)<\epsilon$ for all $i\in\omega$. Similarly, we say that the sequence $\seq{x_i}$ \emph{asymptotically shadows} $\seq{y_i}$ if $d(x_i,y_i)$ limits to zero.

\begin{dfn}
	Let $(X,f)$ be a dynamical system with $X$ metric. The system $(X,f)$ has the shadowing property provided that for all $\epsilon>0$ there exists $\delta>0$ such that for every $\delta$-pseudo-orbit $\seq{x_i}$ of $f$, there exists a point $z\in X$ such that $\seq{f^i(z)}$ $\epsilon$-shadows $\seq{x_i}$.
\end{dfn}

\begin{dfn}
	Let $(X,f)$ be a dynamical system with $X$ metric. The system $(X,f)$ has the asymptotic shadowing property provided that for every asymptotic pseudo-orbit $\seq{x_i}$ of $f$, there exists a point $z\in X$ such that $\seq{f^i(z)}$ asymptotically shadows $\seq{x_i}$.
\end{dfn}

We will frequently abuse notation and identify the point $z$ with its orbit sequence $\seq{f^i(z)}$ and say that the point $z$ shadows the pseudo-orbit $\seq{x_i}$ in this case.

For an arbitrary sequence $\seq{x_i}$ in a topological space $X$, the $\omega$-limit set of $\seq{x_i}$ is the  following:
\[\omega\seq{x_i}=\bigcap_{N\in\omega}\overline{\{x_i:i\geq N\}}.\]

As with shadowing, in the context of a dynamical system, we often associate a point with its orbit sequence, and thus for a dynamical system $(X,f)$ and $x\in X$, the \emph{$\omega$-limit set of $x$} is the set $\omega(x)=\omega\seq{f^i(x)}$. We say that a subset $Z$ of $X$ is an $\omega$-limit set of the system if there is a point $x\in X$ with $\omega(x)=Z$.

The $\omega$-limit sets of points exhibit a variety of dynamical systems that are not shared by $\omega$-limit sets of arbitrary sequences. Among other things, it is well known that $\omega$-limit sets of points are invariant subsets of the dynamical system. In contrast, $\omega$-limit sets of arbitrary sequences are less well studied, and generally less well-behaved. However, the following results concerning the $\omega$-limit sets of sequences are immediate.

\begin{lemma}\label{omegasubsequences}
	Let $X$ be a metric space. Let $\seq{n_i}$ be a subsequence in $\omega$. Then  $\omega\seq{x_{n_i}}\subseteq\omega\seq{x_i}$.

\end{lemma}

\begin{lemma}\label{omegaequalsomega}
	Let $X$ be a metric space. If $\seq{x_i}$ is asymptotically shadowed by $\seq{z_i}$, then $\omega\seq{x_i}=\omega\seq{z_i}$.
\end{lemma}

An additional concept that we make use of is that of internal chain transitivity. Let $(X,f)$ be a dynamical system with $X$ metric. For $\delta>0$ and $a,b\in X$, a \emph{$\delta$-chain from $a$ to $b$} is a finite sequence $x_0,x_1,\ldots x_n$ such that $d(f(x_i),x_{i+1})<\delta$ for $i<n$ with $a=x_0$ and $b=x_n$.

\begin{dfn}
	Let $(X,f)$ be a dynamical system with $X$ metric. A set $A$ is internally chain transitive (ICT) if for all $a,b\in A$ and all $\delta>0$ there exists a $\delta$-chain from $a$ to $b$ consisting of elements of $A$.
\end{dfn}


\section{Shadowing in subshifts of Baire space}

We begin our exposition of results by exploring the degree to which subshifts of bounded type are the appropriate analogue for subshifts of finite type in the finite alphabet case.

The following characterization of subshifts of bounded type is a generalization of a standard characterization for subshifts of finite type.

\begin{theorem}
	Let $\Gamma$ be a subshift of $\Lambda^\omega$. Then $\Gamma$ is an SBT if and only if there exists $M\in\omega$ such that if $uw,wv\in B(\Gamma)$ and the length of $w$ is at least $M$, then $uwv\in B(\Gamma)$.
\end{theorem}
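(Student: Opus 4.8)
The plan is to prove both implications directly, exploiting that $B(\Gamma)$ is closed under passing to subwords (any subword of an allowed word is allowed), and using a single integer both as the length bound on basis words and as the gluing threshold. For the forward direction I would take the bound coming from the SBT hypothesis; for the reverse I would manufacture a basis out of the short forbidden words and push an induction through the gluing hypothesis.

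For the forward direction, suppose $\Gamma$ is an SBT with basis $\C F(\Gamma)$ all of whose words have length less than some $n$, and set $M=n-1$. Given $uw,wv\in B(\Gamma)$ with $|w|\geq M$, I argue by contradiction: if $uwv\notin B(\Gamma)$ then $uwv$ is forbidden, so it contains some basis word $w'$ as a subword with $|w'|\leq n-1=M\leq|w|$. The decisive observation is positional. The block $w$ occupies a contiguous run of length $\geq M$ in the middle of $uwv$, namely the positions $[\,|u|,|u|+|w|-1\,]$. An occurrence of $w'$ that lies neither inside the prefix $uw$ nor inside the suffix $wv$ must begin strictly before $w$ (at a position $\leq|u|-1$) and end strictly after $w$ (at a position $\geq|u|+|w|$), forcing $|w'|\geq|w|+2>M$, a contradiction. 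Hence the occurrence lies within $uw$ or within $wv$; but each is allowed and so contains no forbidden subword, again a contradiction. Therefore $uwv\in B(\Gamma)$.

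For the reverse direction, assume the gluing property holds for some $M$ and take as candidate basis the set $\C F(\Gamma)$ of all forbidden words of length at most $M+1$. To show this is a basis it suffices to prove that every forbidden word contains a forbidden subword of length $\leq M+1$; equivalently, a word all of whose length-($\leq M+1$) subwords are allowed is itself allowed. I would prove this by strong induction on the word length $L$. The base case $L\leq M+1$ is immediate, since the word is one of its own subwords of length $\leq M+1$. For $L\geq M+2$, strip the two endpoints: the prefix $z'=z_{[0,L-1)}$ and the suffix $z''=z_{[1,L)}$ each inherit the ``all short subwords allowed'' hypothesis (their short subwords are short subwords of $z$), so by induction both are allowed. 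Writing $z=uwv$ with $u=z_0$, $w=z_{[1,L-1)}$, $v=z_{L-1}$ yields $uw=z'\in B(\Gamma)$ and $wv=z''\in B(\Gamma)$ with $|w|=L-2\geq M$, so the gluing hypothesis gives $z=uwv\in B(\Gamma)$, closing the induction. Thus $\C F(\Gamma)$ is a basis of forbidden words all of length $\leq M+1$, and $\Gamma$ is an SBT.

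The main obstacle I anticipate is the bookkeeping in the reverse induction: matching the decomposition $z=uwv$ to the gluing threshold so that the middle word has length exactly $L-2\geq M$, and verifying that removing a single symbol from each end preserves the inductive hypothesis. The forward direction is essentially a counting argument about where a short word can sit relative to a long block, and the only care required there is the off-by-one relating the SBT length bound $n$ to the threshold $M$.
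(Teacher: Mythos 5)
Your proof is correct and follows essentially the same route as the paper: the forward direction is the same positional counting argument showing a short basis word inside $uwv$ must sit inside $uw$ or $wv$, and the reverse direction uses the same candidate basis (forbidden words of length at most $M+1$) with the same strip-one-symbol-from-each-end induction, merely phrased in contrapositive form. No gaps.
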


\begin{proof}
	
	First, suppose that $\Gamma$ is an SBT, we can choose a bounded basis $mathcal F(\Gamma)$ for $F(\Gamma)$. Choose $M$ so that no word in $\mathcal F(\Gamma)$ has length more than $M+1$.
	
	Now, let $u,v,w\in B(\Gamma)$ with $uw,wv\in B(\Gamma)$ and $w$ of length at least $M$. It is then clear that is $p$ is a subword of $uwv$ of length no more than $M+1$, then $p$ is a subword of either $uw$ or $wv$, and hence does not belong to $\mathcal F(\Gamma)$. As such, $uwv$ has no subwords belonging to $\mathcal F(\Gamma)$, and hence $uwv\in B(\Gamma)$ as claimed.
	
	Now, suppose that $\Gamma$ is a subshift of $\Lambda^\omega$ and suppose that $M\in\omega$ satisfies the hypotheses. Let $\mathcal F(\Gamma)$ consist of those forbidden words of length no more than $M+1$. We demonstrate by induction that any forbidden word contains a subword in $\mathcal F(\Gamma)$.
	
	First, we note that if $w$ is a forbidden word of length less than or equal to $M+1$, then $w$ has a subword belonging to $F(\Gamma)$, namely itself. Now, suppose that any forbidden word of length $l$ with $M+1\leq l\leq K$ contains a subword in $\mathcal F(\Gamma)$. Let $p=p_0p_1\cdots p_{K-1}p_K$ be a forbidden word of length $K+1$. Then $w=p_1\cdots d_{K-1}$ is a word of length at least $M$. Thus, at least one of $p_0w$ and $wp_K$ must be forbidden, and by the inductive hypotheses contains a subword belonging to $\mathcal F(\Gamma)$.
	\end{proof}

In \cite{WaltersFiniteType}, it is demonstrated that the shadowing property completely characterizes subshifts of finite type in the setting of a finite alphabet. If we allow for $\Lambda$ to be (possibly) infinite, we have the following analogous results.

\begin{lemma}
	Let $\Gamma$ be an SBT.  Then $(\Gamma, \sh)$ has shadowing.
\end{lemma}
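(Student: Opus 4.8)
The plan is to build a single point $z\in\Gamma$ whose orbit tracks the pseudo-orbit, using the gluing characterization of bounded type established in the preceding theorem as the only nontrivial ingredient. Let $M\in\w$ be the constant provided by that characterization, so that whenever $uw,wv\in B(\Gamma)$ with the length of $w$ at least $M$, one has $uwv\in B(\Gamma)$. Given $\eps>0$, I would first fix $K\in\w$ with both $2^{-K}<\eps$ and $K>M$, and then set $\delta=2^{-K}$.

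The first step is to translate the hypotheses into combinatorics of words. Unwinding the metric, $d(\sh(x),y)<2^{-K}$ says exactly that $x_{j+1}=y_j$ for all $j\le K$; hence for a $\delta$-pseudo-orbit $\seq{x^{(i)}}$ in $\Gamma$ (where I write $x^{(i)}=x^{(i)}_0x^{(i)}_1\cdots$ for the $i$-th point and its coordinates) consecutive terms overlap after a shift, and an easy induction on $j$ gives $x^{(i+j)}_0=x^{(i)}_j$ for all $i$ and all $j\le K$. I then define the candidate point coordinatewise by $z_m=x^{(m)}_0$. The overlap relation immediately yields $z_{[i,i+K)}=x^{(i)}_{[0,K)}$ for every $i$, so $\sh^i(z)$ and $x^{(i)}$ agree on their first $K$ coordinates and $d(\sh^i(z),x^{(i)})\le 2^{-K}<\eps$. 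Thus $z$ would $\eps$-shadow the pseudo-orbit, and it only remains to check that $z$ genuinely lies in $\Gamma$.

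This last point is the crux, and the sole place the bounded type hypothesis does real work. Since $\Gamma$ is closed and $\sh$-invariant, $z\in\Gamma$ holds precisely when every finite subword $z_{[a,b)}$ is allowed (each allowed prefix is an initial segment of some point of $\Gamma$, and these points converge to $z$). I would prove $z_{[i,i+n)}\in B(\Gamma)$ for all $i$ by induction on $n$. For $n\le K$ the word is a prefix of the window $z_{[i,i+K)}=x^{(i)}_{[0,K)}$, which is a subword of $x^{(i)}\in\Gamma$ and hence allowed. For $n>K$, write $z_{[i,i+n)}=uwv$ with $u=z_{[i,i+n-1-M)}$, with $w=z_{[i+n-1-M,i+n-1)}$ of length exactly $M$, and with $v=z_{i+n-1}$; then $uw=z_{[i,i+n-1)}$ is allowed by the inductive hypothesis, while $wv$ has length $M+1\le K$ and sits inside the allowed window $z_{[i+n-K,i+n)}$, so $wv\in B(\Gamma)$. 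Since $|w|=M$, the gluing property forces $uwv=z_{[i,i+n)}\in B(\Gamma)$, completing the induction and hence showing $z\in\Gamma$.

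The routine parts are the metric bookkeeping of the first two steps; the one genuine obstacle is the membership $z\in\Gamma$, which would fail for a general subshift and is exactly what the uniform bound $M$ on forbidden-word length buys us via the gluing lemma. I would close by noting that the argument never invokes finiteness of $\Lambda$, which is precisely why bounded type, rather than finite type, is the correct hypothesis in this setting.
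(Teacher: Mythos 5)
Your construction of the shadowing point is the same as the paper's: both define $z$ coordinatewise by $z_m=x^{(m)}_0$, use the overlap forced by the pseudo-orbit condition to get $z_{[i,i+K)}=x^{(i)}_{[0,K)}$, and conclude $\eps$-shadowing; this part is correct and essentially identical. Where you diverge is in verifying $z\in\Gamma$. The paper works directly from the definition of bounded type: it fixes a basis $\mathcal F(\Gamma)$ of forbidden words all of length at most $K$, observes that every subword of $z$ of length at most $K$ equals $x^{(i)}_{[0,k)}$ for some $i$ and hence is allowed, and concludes that $z$ contains no basis forbidden word. You instead invoke the gluing characterization from the preceding theorem and run an induction on subword length, splicing $z_{[i,i+n)}=uwv$ with $|w|=M$ and gluing the allowed pieces $uw$ and $wv$. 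Both arguments are valid; yours is correct but somewhat longer, routing through the equivalence theorem and an induction where the paper needs only the one-line observation that a bounded basis cannot detect a forbidden word longer than the windows already certified. (Your reduction of $z\in\Gamma$ to ``every finite subword of $z$ is allowed'' is also fine, using that $B(\Gamma)$ coincides with the set of initial segments and that $\Gamma$ is closed.) One stylistic gain of your route is that it isolates exactly which consequence of bounded type is used (the gluing constant $M$); the cost is dependence on the preceding theorem rather than on the definition alone.
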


\begin{proof}
	
	Choose $K\in N$ and $\C F(\Gamma)$ a basis of the forbidden words of $\Gamma$ such that all the elements  of $\C F(\Gamma)$ have length less than or equal to $K$. Fix $\epsilon>0$, and choose $M\geq K$ such that if $u,v\in\Gamma$ with $u_{[0,M)}=v_{[0,M)}$, then $d(u,v)<\epsilon$. Now, choose $\delta>0$ such that if $u,v\in\Gamma$ with $d(u,v)<\delta$, then $u_{[0,M)}=v_{[0,M)}$.
	
%
%
%
	
	Let $\seq{x^i}_{i\in \w}$ be a $\delta$-pseudo-orbit in $\Gamma$ for $\sh$ where each $x^i$ is given by $x^i_0x^i_1x^i_2\cdots\in\Gamma$.  Define \[z=z_0z_1z_2\cdots=x^0_0x^1_0x^2_0\cdots\in\Lambda^\omega\] to be the point whose $i$th entry is the initial entry of each $x^i$.  We show that $z\in \Gamma$, and we show that $z$ $\epsilon$-shadows $(x^i)_{i\in \w}$.
	
	Since $(x^i)_{i\in \w}$ is a $\delta$-pseudo-orbit for $\sh$ we see that $d(\sh(x^i),x^{i+1})<\delta$.  So for every $i\in \w$, we have symbol agreement of the following sort: \[x^i_1x^i_2\cdots x^i_{M}=x^{i+1}_0x^{i+1}_1\cdots x^{i+1}_{M-1}.\]  This gives us the following in which every column is made up of identical symbols:\begin{align*}
	x^{i+0}_0 x^{i+0}_1 &x^{i+0}_2 \cdots  x^{i+0}_{M-1} \\
	x^{i+1}_0 &x^{i+1}_1 \cdots  x^{i+1}_{M-2} x^{i+1}_{M-1}\\
	&x^{i+2}_0 \cdots  x^{i+2}_{M-3} x^{i+2}_{M-2} x^{i+2}_{M-1}\\
	\end{align*}
	from which it follows that for all $i\in\omega$ and $0\leq l\leq \min\{M,i\}$
	\[z_i=x_{l}^{i-l}.\]
	In turn, it follows that for all $i\in\omega$ $z_{[i,i+M)}=x^i_{[0,M)}$, and hence $d(\sigma^i(z),x^i)<\epsilon$, i.e. $z$ $\epsilon$-shadows $\seq{x^i}_{i\in\omega}$.
	
	Finally, notice that for each  $i\in\omega$ and $k\leq K$, since  $k\leq K\leq M$, we have $z_{[i,i+k)}=x^i_{[0,k)}\notin \mathcal F(\Gamma)$, and hence $z\in\Gamma$ as desired.
	\end{proof}

\begin{lemma}
	Let $(\Gamma,\sigma)$ have shadowing. Then $\Gamma$ is an SBT.
\end{lemma}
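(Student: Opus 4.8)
The plan is to avoid constructing a bounded forbidden-word basis directly and instead verify the combinatorial gluing condition from the characterization theorem proved above. Concretely, I will exhibit a single $M\in\omega$ with the property that whenever $uw,wv\in B(\Gamma)$ and $|w|\ge M$, the concatenation $uwv\in B(\Gamma)$. Once this is in hand, that theorem immediately gives that $\Gamma$ is an SBT, so the whole argument reduces to producing such an $M$ from the shadowing hypothesis.

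To fix $M$, I would apply shadowing with $\epsilon=1$ to obtain a corresponding $\delta>0$, and then choose $M\in\omega$ so large that $2^{-M}<\delta$. Now suppose $uw,wv\in B(\Gamma)$ with $|w|\ge M$, and write $p=|u|$, $q=|w|\ge M$, and $r=|v|$. Since $B(\Gamma)$ coincides with the set of initial segments of points of $\Gamma$, I can select $a\in\Gamma$ with initial segment $uw$ and $b\in\Gamma$ with initial segment $wv$, so that $a_{[0,p)}=u$, $a_{[p,p+q)}=w$, $b_{[0,q)}=w$, and $b_{[q,q+r)}=v$. The key object is the sequence that tracks the orbit of $a$ through the block $u$ and then splices onto the orbit of $b$:
\[ x^i=\sigma^i(a) \text{ for } 0\le i<p, \qquad x^i=\sigma^{i-p}(b) \text{ for } i\ge p. \]
Every consecutive pair $\sigma(x^i),x^{i+1}$ agrees exactly except at the single splice $i=p-1$, where $\sigma(x^{p-1})=\sigma^p(a)$ while $x^p=b$. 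But $\sigma^p(a)$ and $b$ both begin with the length-$q$ word $w$, so they agree on their first $q\ge M$ coordinates, giving $d(\sigma^p(a),b)\le 2^{-q}\le 2^{-M}<\delta$. Thus $\seq{x^i}$ is a $\delta$-pseudo-orbit.

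Shadowing then provides $z\in\Gamma$ with $d(\sigma^i(z),x^i)<1$ for all $i$. Since $d<1$ forces agreement in coordinate $0$, we get $z_i=x^i_0$ for every $i$, and reading these off yields $z_{[0,p)}=u$, $z_{[p,p+q)}=w$, and $z_{[p+q,p+q+r)}=v$. Hence $uwv$ is an initial segment of $z\in\Gamma$, so $uwv\in B(\Gamma)$, which establishes the gluing property for $M$ and lets me conclude via the characterization theorem that $\Gamma$ is an SBT. I expect the only delicate points to be bookkeeping rather than conceptual: aligning indices so that the spliced sequence reads off $u$, then $w$, then $v$ in coordinate $0$, and confirming that the lone splice is the only place where the sequence fails to be a genuine orbit, so that the $\delta$-estimate reduces entirely to the overlap on $w$. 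Everything then rests on the single quantitative fact that a long common prefix $w$ makes $\sigma^p(a)$ and $b$ metrically close, which is precisely what the threshold $2^{-M}<\delta$ is chosen to guarantee.
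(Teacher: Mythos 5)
Your proof is correct, but it takes a genuinely different route from the paper's. The paper argues directly at the level of forbidden words: it takes any word $w$ all of whose subwords of length at most $K+1$ are allowed, builds a ``sliding window'' $\delta$-pseudo-orbit $\seq{u^i}$ with $u^i_{[0,K+1)}=w_{[i,i+K+1)}$ (extended by a genuine orbit at the tail), and uses $1$-shadowing plus expansivity of the zeroth coordinate to conclude $w$ is allowed; hence the forbidden words of length at most $K+1$ form a bounded basis. You instead verify the overlap-gluing criterion from the characterization theorem at the start of Section 3 (there exists $M$ such that $uw,wv\in B(\Gamma)$ with $|w|\geq M$ implies $uwv\in B(\Gamma)$), using a pseudo-orbit with a \emph{single} splice whose error is controlled by the length of the common overlap $w$. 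Your splice estimate $d(\sigma^{|u|}(a),b)\leq 2^{-|w|}<\delta$ and the extraction $z_i=x^i_0$ from $1$-shadowing are both sound, and since $B(\Gamma)$ coincides with the set of initial segments, the points $a,b$ you need do exist. What each approach buys: yours is arguably cleaner and localizes the pseudo-orbit error to one index, but it leans on the characterization theorem (and its inductive proof) as a black box; the paper's argument is self-contained and produces the bounded basis explicitly, which is the form of the definition used elsewhere. One small point of care in your version: you should note that the gluing condition is only needed for nonempty $u$ (the case $u$ empty being trivial), so that the splice index $p-1$ makes sense; this is bookkeeping, not a gap.
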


\begin{proof}
	
	Let $\Gamma$ be a subshift such that $(\Gamma,\sigma)$ has shadowing. Fix $\epsilon=1$ and let $\delta>0$ witness shadowing with respect to $\epsilon$. Choose $K\in\N$ such that if $u,v\in\Gamma$ with $u_{[0,K)}=v_{[0,K)}$, then $d(u,v)<\epsilon$. 
	
	Let $l> K$ and suppose that $w=w_0w_1\cdots w_l-1$ is a word of length $l$ and suppose that every subword of $w$ of length less than or equal to $K+1$ is allowed. As such, we can find words $u^i\in\Gamma$ with $u^i_{[0,K+1)}=w_{[i,i+K+1)}$ for $i\leq l-K-1$. For $i\geq l-K-1$, define $u^i=\sigma^{i-(l-K-1)}(u^{l-K-1})$. It is immediate from this construction that $\sigma(u^i)_{[0,K)}=u^{i+1}_{[0,K)}$ and so $\seq{u^i}_{i\in\omega}$ is a $\delta$-pseudo-orbit, and since $(\Gamma,\sigma)$ has shadowing, there exists $z\in\Gamma$ that $\epsilon$ shadows it. Since $\epsilon=1$, it must be the case that $z_i=u^i_0$ for all $i\in\omega$, and hence $z_{[0,l)}=w$ is an allowed word for $\Gamma$.
	
	It follows that if $w$ is a forbidden word of $\Gamma$, then $w$ must contain a forbidden word of length less than or equal to $K+1$, i.e. $F(\Gamma)$ has a bounded basis.
	\end{proof}

Combining these two results yields the following characterization of shadowing in subshifts over arbitrary $\Lambda$.

\begin{theorem}\label{shadowingSBT}
	Let $\Gamma$ be a subshift of $\Lambda^\omega$. Then $\Gamma$ is an SBT if and only if $(\Gamma,\sigma)$ has shadowing.
\end{theorem}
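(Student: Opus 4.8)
The plan is to observe that the asserted biconditional splits into exactly the two implications established in the two preceding lemmas, so no new argument is required. Writing $P$ for the statement ``$\Gamma$ is an SBT'' and $Q$ for ``$(\Gamma,\sigma)$ has shadowing'', the characterization is precisely $P\iff Q$. The first of the two lemmas above furnishes the implication $P\Rightarrow Q$, and the second furnishes $Q\Rightarrow P$; together these yield the equivalence. Thus I would phrase the proof as a direct synthesis rather than develop anything from scratch.

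More concretely, for the forward implication I would simply invoke the lemma asserting that every SBT has shadowing. Its proof constructs the shadowing point $z$ by reading the initial symbols down the ``diagonal'' of the given $\delta$-pseudo-orbit and then verifies, using the bounded forbidden-word basis $\C F(\Gamma)$, both that $z\in\Gamma$ and that $z$ $\epsilon$-shadows the pseudo-orbit. For the reverse implication I would invoke the lemma asserting that shadowing forces the SBT property: its proof takes an arbitrary word all of whose sufficiently short subwords are allowed, assembles a $\delta$-pseudo-orbit whose shadowing point reconstitutes that word symbol by symbol (using $\epsilon=1$ to force exact agreement of initial symbols), and concludes that the word itself lies in $B(\Gamma)$, so that $F(\Gamma)$ admits a bounded basis.

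Since both implications are already in hand, there is no genuine obstacle to the theorem itself; the only point I would pause to verify is that the hypotheses and conclusions of the two lemmas line up with the two directions of the biconditional without any gap. In particular, one checks that ``shadowing'' denotes the same property in both lemmas and that the SBT condition is used in the same sense in each. These matchings are immediate from the definitions, so the theorem follows at once by combining the two lemmas.
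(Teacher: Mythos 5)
Your proposal is correct and matches the paper exactly: the theorem is stated there as an immediate consequence of the two preceding lemmas (``Combining these two results yields the following characterization''), with no additional argument. Your synthesis of the two implications, and your summary of what each lemma's proof does, is faithful to the paper's treatment.
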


It is a standard result that subshifts over finite alphabet are \emph{positively expansive}, i.e. there exists an \emph{expansive constant} $c>0$ such that if $x,y\in\Gamma$ with $d(\sigma^n(x),\sigma^n(y))<c$ for all $n\in\omega$, then $x=y$. In fact, the expansive constant $c$ can be taken to be 1 in this setting, and this carries over to the setting of arbitrary $\Lambda$ perfectly.

\begin{theorem}
	Let $\Gamma$ be a subshift of $\Lambda^\omega$. Then $(\Gamma, \sigma)$ is positively expansive with expansive constant 1.
\end{theorem}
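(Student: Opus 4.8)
The plan is to reduce the claim to a direct reading of the metric $d$, since positive expansiveness with constant $1$ will follow from the single observation that $d$ detects disagreement in the zeroth coordinate. Nothing about the subshift structure of $\Gamma$ is needed beyond $\Gamma\subseteq\Lambda^\omega$.

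First I would record the elementary fact that for any $u,v\in\Lambda^\omega$ we have $d(u,v)<1$ if and only if $u_0=v_0$. This comes straight from the definition of $d$: the set $\{2^{-n}:u_i=v_i\text{ for all }i<n\}$ always contains $2^0=1$, since the condition is vacuous for $n=0$, and so $d(u,v)\le 1$ always. If $u_0\neq v_0$, then no $n\ge 1$ satisfies the defining condition, so the infimum is attained only at $n=0$ and $d(u,v)=1$. Conversely, if $u_0=v_0$, then $n=1$ satisfies the condition, giving $d(u,v)\le 2^{-1}<1$.

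Next I would apply this to the shifted points. Suppose $x,y\in\Gamma$ satisfy $d(\sigma^n(x),\sigma^n(y))<1$ for every $n\in\omega$. Since $(\sigma^n(x))_0=x_n$ and $(\sigma^n(y))_0=y_n$, the previous step forces $x_n=y_n$ for every $n\in\omega$, and hence $x=y$. This is exactly the assertion that $1$ is an expansive constant for $(\Gamma,\sigma)$.

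I expect no genuine obstacle in this argument; it is essentially a computation once the metric is unpacked, and indeed the statement holds verbatim for the full shift and every subset of it. The only point requiring care is the correct handling of the vacuous case $n=0$ in the definition of $d$, which is precisely what pins the expansive constant at exactly $1$ rather than at some smaller value.
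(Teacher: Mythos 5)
Your proof is correct and takes essentially the same route as the paper: the paper likewise observes that $d(\sigma^n(x),\sigma^n(y))<1$ forces $x_n=y_n$ for every $n$, hence $x=y$. Your only addition is the explicit unpacking of the metric (including the vacuous $n=0$ case), which the paper leaves implicit.
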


\begin{proof}
	
	Let $x,y\in\Gamma$ such that for all $n\in\omega$, $d(\sigma^n(x),\sigma^n(y))<1$. Then for all $n\in\omega$, we have $x_n=y_n$, and hence $x=y$.
	\end{proof}

As a consequence, we can show that shadowing implies asymptotic shadowing for subshifts of $\Lambda^\omega$. In fact, the following more general result holds.

\begin{theorem}\label{ShadImpliesAsymptShad}
If $(X,f)$ is a positively expansive surjective system with shadowing, then $(X,f)$ has asymptotic shadowing.
\end{theorem}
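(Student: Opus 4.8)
The plan is to combine the shadowing property with positive expansiveness and surjectivity to manufacture a single asymptotic shadowing point out of a coherent family of finite-precision shadowing points. Let $c$ be the expansive constant, and fix an asymptotic pseudo-orbit $\seq{x_i}$. First I would choose a strictly decreasing sequence $\epsilon_n\to 0$ with $2\epsilon_1<c$, and for each $n$ let $\delta_n>0$ witness shadowing for $\epsilon_n$. Since $d(f(x_i),x_{i+1})\to 0$, I can choose a strictly increasing sequence $N_n\in\omega$ so that $d(f(x_i),x_{i+1})<\delta_n$ whenever $i\geq N_n$; then each tail $\seq{x_{N_n+j}}_{j\in\omega}$ is a genuine $\delta_n$-pseudo-orbit, and shadowing produces a point $y_n$ with $d(f^j(y_n),x_{N_n+j})<\epsilon_n$ for all $j\in\omega$.

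The key step is to show that these tail-shadowers cohere under iteration: setting $m_n=N_{n+1}-N_n\geq 0$, I claim $f^{m_n}(y_n)=y_{n+1}$. Both $f^{m_n}(y_n)$ and $y_{n+1}$ shadow the same tail $\seq{x_{N_{n+1}+j}}_j$ --- the former within $\epsilon_n$ (it is just $y_n$ advanced by $m_n$ steps) and the latter within $\epsilon_{n+1}$ --- so by the triangle inequality their orbits stay within $\epsilon_n+\epsilon_{n+1}\leq 2\epsilon_1<c$ of each other for \emph{every} $j$. Positive expansiveness with constant $c$ then forces $f^{m_n}(y_n)=y_{n+1}$. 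This is the crux of the argument, and the only place expansiveness is used.

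Finally I would use surjectivity to anchor these points to a single initial point. Since $f$, and hence $f^{N_1}$, is surjective, I choose any $z$ with $f^{N_1}(z)=y_1$. Iterating the coherence relation gives $f^{N_n}(z)=f^{N_n-N_1}(y_1)=y_n$ for every $n$. Consequently, for each $n$ and each $i\geq N_n$ we have $d(f^i(z),x_i)=d(f^{\,i-N_n}(y_n),x_i)<\epsilon_n$. Given $\eta>0$, picking $n$ with $\epsilon_n<\eta$ shows $d(f^i(z),x_i)<\eta$ for all $i\geq N_n$, so $d(f^i(z),x_i)\to 0$ and $z$ asymptotically shadows $\seq{x_i}$, as required.

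I expect the main obstacle to be the coherence step: one must check carefully that the two orbits being compared really do shadow a \emph{common} tail, so that the $\epsilon_n+\epsilon_{n+1}$ bound holds for \emph{all} $j$ rather than merely eventually, and that this uniform-in-$j$ closeness is exactly the hypothesis positive expansiveness needs in order to conclude equality. Once coherence is established, surjectivity supplies the single preimage essentially for free, and the bookkeeping with the indices $N_n$ is routine.
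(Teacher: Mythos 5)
Your proof is correct and uses essentially the same argument as the paper: shadow tails of the asymptotic pseudo-orbit at finer and finer precision, use positive expansiveness (via the triangle inequality on a common tail) to force the shadowing orbits to coincide, and use surjectivity to anchor everything at a single point $z$. The paper merely packages it slightly differently, fixing one $c/2$-shadower up front and comparing it to each $\epsilon$-shadower separately rather than building the coherent tower $\seq{y_n}$ all at once.
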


\begin{proof}
	
	Let $c$ be the expansive constant of $(X,f)$.

	Since $(X,f)$ has shadowing, choose $\delta_0>0$ so that every $\delta_0$-pseudo-orbit is $c/2$-shadowed.
	
	Let $\seq{x_i}$ be an asymptotic pseudo-orbit, and fix $N\in\omega$ such that $\seq{x_{i+N}}$ is a $\delta_0$-pseudo-orbit. Now, as $f$ is surjective, we choose a point $z\in X$ so that its $N$-th image under $f$, $f^N(z)$, $c/2$-shadows $\seq{x_{i+N}}$.
	
	In fact, this point necessarily asymptotically shadows $\seq{x_i}$. Indeed, fix $c/2>\epsilon>0$, and choose $\delta>0$ such that every $\delta$-pseudo-orbit is $\epsilon$-shadowed. As above, we can find a point $z'\in X$ and $N'$ such that $f^{N'}(z')$ $\epsilon$-shadows $x_{i+N'}$.
	
	In particular, for $n\geq \max\{N,N'\}$ we have
	
	\[d(f^n(z),f^n(z')\leq d(f^n(z), x_n)+d(x_n,f^n(z'))<c\]
	
	and so, $f^n(z)=f^n(z')$ as $f$ is positively expansive with constant $c$. In particular, for $n\geq \max\{N,N'\}$, $z$ \emph{eventually} $\epsilon$-shadows $\seq{x_i}$. As $\epsilon>0$ was arbitrary, it follows that $z$ asymptotically shadows $\seq{x_i}$.
	\end{proof}

\begin{corollary}\label{SBTAsymptotic}
	Let $\Gamma$ be a subshift of $\Lambda^\omega$. If $\Gamma$ is an SBT, then $(\Gamma,\sigma)$ has asymptotic shadowing.
\end{corollary}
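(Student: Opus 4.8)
The plan is to assemble the corollary directly from the three results immediately preceding it, since it is essentially a bookkeeping combination of them. First I would observe that because $\Gamma$ is an SBT, Theorem \ref{shadowingSBT} guarantees that $(\Gamma,\sigma)$ has the shadowing property. Second, the theorem establishing that every subshift of $\Lambda^\omega$ is positively expansive with expansive constant $1$ supplies the positive expansivity hypothesis with an explicit constant. Third, I would note that $\sigma$ is surjective on $\Gamma$: this is immediate from the definition of a subshift, which demands $\sigma(\Gamma)=\Gamma$ rather than merely $\sigma(\Gamma)\subseteq\Gamma$, so the restriction of $\sigma$ to $\Gamma$ maps onto $\Gamma$.

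With these three ingredients in hand, $(\Gamma,\sigma)$ satisfies exactly the hypotheses of Theorem \ref{ShadImpliesAsymptShad}, namely it is a positively expansive, surjective system possessing shadowing. Applying that theorem then yields asymptotic shadowing for $(\Gamma,\sigma)$ at once, which is precisely the conclusion of the corollary.

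There is no genuine obstacle here, and I would keep the write-up to a single short paragraph. The one point that merits explicit mention is the surjectivity of $\sigma$, which is easy to overlook but is built into the invariance clause of the definition of a subshift; verifying it is the only nontrivial hypothesis-check, and even that is immediate. All of the substantive work has already been done upstream: the shadowing property for SBTs was obtained by combining the two lemmas into Theorem \ref{shadowingSBT}, and the passage from shadowing to \emph{asymptotic} shadowing under positive expansivity and surjectivity is the content of Theorem \ref{ShadImpliesAsymptShad}.
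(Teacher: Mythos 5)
Your proposal is correct and matches the paper's intended argument exactly: the corollary is deduced by combining Theorem \ref{shadowingSBT}, the positive expansivity of subshifts with constant $1$, and Theorem \ref{ShadImpliesAsymptShad}, with surjectivity of $\sigma$ on $\Gamma$ coming from the invariance clause $\sigma(\Gamma)=\Gamma$ in the definition of a subshift. Your explicit remark on checking surjectivity is a welcome touch, since the paper leaves that hypothesis unverified.
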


Thus, in the context of shadowing in subshifts over infinite alphabet, subshifts of bounded type are the correct analogue for subshifts of finite type in the finite alphabet case.

\section{$\omega$-limit sets in subshifts of Baire space}

It should be noted however, that there are significant differences between these type of dynamical systems, in part due to the lack of compactness.

In a compact dynamical system, an $\omega$-limit set is necessarily internally chain transitive \cite{BGKR-omega}. In the context of shift spaces, the authors have demonstrated that there is a stronger connection between internal chain transitive sets and $\omega$-limit sets. Specifically, they have shown that in a subshift of finite type $\Gamma$, a closed, invariant subset $A$ is an $\omega$-limit set of some point $z\in\Gamma$ if and only if $A$ is internally chain transitive \cite{MR}. This connection proves to hold in other classes of dynamical systems exhibiting appropriate forms of shadowing \cite{GoodMeddaugh-ICT}.

Strikingly, in the context of shifts over infinite alphabets, even the first result fails.

\begin{remark} \label{counterexample}
	Consider the full shift with alphabet $\omega$. The point \[x=0^121^230^341^450^561^670^781^8\cdots\]
	has $\omega(x)=\{1^\omega,0^\omega\}$, which is closed and invariant, but not internally chain transitive.
\end{remark}

Perhaps more surprisingly, in subshifts of bounded type over \emph{countable} alphabets, the converse \emph{does} hold, i.e. closed invariant internally chain transitive sets are necessarily $\omega$-limit sets.

\begin{lemma}\label{ICTAsymptotic}
	Let $(X,f)$ be a dynamical system with $X$ separable metric. Let $Z\subseteq X$ be closed and internally chain transitive. Then there exists an asymptotic pseudo-orbit $\seq{x_i}$ in $Z$ such that $\omega\seq{x_i}=Z$.
\end{lemma}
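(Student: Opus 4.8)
The plan is to build the desired asymptotic pseudo-orbit by stringing together finer and finer $\delta$-chains that exhaust a countable dense subset of $Z$, using internal chain transitivity to connect the pieces. Since $Z$ is separable metric, fix a countable dense subset $\{d_k : k\in\omega\}$ of $Z$. The idea is to construct $\seq{x_i}$ in $Z$ so that (i) the "errors" $d(f(x_i),x_{i+1})$ tend to zero, making it an asymptotic pseudo-orbit, and (ii) every point $d_k$ is approached arbitrarily often along the sequence, forcing $\omega\seq{x_i}$ to contain $Z$; closedness of $Z$ will give the reverse inclusion.

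First I would set up the inductive bookkeeping. Choose a sequence $\delta_n\to 0$ (say $\delta_n=2^{-n}$). I would construct the sequence in consecutive blocks $B_1,B_2,B_3,\dots$, where block $B_n$ is a single $\delta_n$-chain in $Z$. The key design constraint is that within the $n$th stage I visit (within distance $\delta_n$) each of the points $d_1,\dots,d_n$, and I splice consecutive blocks together so that the junction error is also controlled. Concretely, having ended block $B_{n-1}$ at some point $a\in Z$, I use internal chain transitivity to produce a $\delta_n$-chain from $a$ to $d_1$, then from $d_1$ to $d_2$, and so on up through $d_n$, concatenating these into one long $\delta_n$-chain that forms $B_n$. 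Because ICT guarantees the chains lie entirely in $Z$, the whole sequence stays in $Z$. The junction between $B_{n-1}$ and $B_n$ is handled by starting $B_n$ at the terminal point of $B_{n-1}$, so no extra error is introduced there beyond the $\delta_n$ tolerance.

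It remains to verify the two properties. For the asymptotic pseudo-orbit condition: every step occurring in block $B_n$ or later has error at most $\delta_n\to 0$, so $d(f(x_i),x_{i+1})\to 0$, which is exactly the definition of an asymptotic pseudo-orbit. For $\omega\seq{x_i}=Z$: each $d_k$ is visited (within $\delta_n$, and $\delta_n\to 0$) in every block $B_n$ with $n\ge k$, so there are infinitely many indices $i$ with $x_i\to d_k$, giving $d_k\in\omega\seq{x_i}$; since $\omega\seq{x_i}$ is closed and $\{d_k\}$ is dense in $Z$, we get $Z\subseteq\omega\seq{x_i}$. Conversely every $x_i$ lies in the closed set $Z$, so $\omega\seq{x_i}\subseteq Z$, yielding equality.

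The \emph{main obstacle} I anticipate is purely organizational rather than conceptual: making the block construction rigorous so that the errors are genuinely controlled at the seams. Internal chain transitivity supplies chains for each fixed $\delta>0$, but I must ensure the concatenation of finitely many chains at stage $n$ is still a $\delta_n$-chain (immediate, since each sub-chain already uses tolerance $\delta_n$ and the endpoints match exactly) and that the transition from stage $n-1$ to stage $n$ does not spoil the decay of errors (handled by making $B_n$ begin precisely where $B_{n-1}$ ended). One subtle point worth checking is that ICT as stated requires $a,b\in Z$ and chains \emph{in} $Z$, so I should confirm the terminal point of each block genuinely lies in $Z$—which it does, being one of the chain points, hence a member of $Z$ by construction.
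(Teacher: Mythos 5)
Your proposal is correct and is essentially the paper's own argument: both take a countable dense subset of $Z$, arrange an enumeration in which every point recurs infinitely often, and concatenate $\delta_n$-chains (with $\delta_n\to 0$) supplied by internal chain transitivity, with matching endpoints so the seams cost nothing. The only cosmetic difference is your block bookkeeping versus the paper's single recurrent enumeration $\seq{z_n}$ with a $1/n$-chain from $z_n$ to $z_{n+1}$; note also that the chain endpoints hit each $d_k$ exactly, so you do not even need the ``within $\delta_n$'' hedge.
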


\begin{proof}
	
	Since $X$ is separable metric, so is $Z$, and as such we can find a countable set $Z'\subseteq Z$ with $\overline{Z'}=Z$. Choose a sequence $\seq{z_n}$ in $Z'$ such that for each $N\in\omega$, $\{z_n:n\geq N\}=Z'$.
	
	Now, since $Z$ is internally chain transitive, for each $n\in\omega$, choose a finite $1/n$-pseudo-orbit $y^n_0, y^n_1,\ldots y^n_{k_n}$ in $Z$ with $y^n_0=z_n$ and $y^n_{k_n}=z_{n+1}$.
	
	We now define the asymptotic pseudo-orbit $\seq{x_i}$ as follows. For $i<k_0$, take $x_i=y^0_i$. For $i\in\omega$ with $\sum_{j=0}^n k_j\leq i<\sum_{j=0}^{n+1} k_j$, define $x_i=y^n_{i-\sum_{j=0}^n k_j}$, i.e.
	\[\seq{x_i}=y^0_0, y^0_1,\cdots, y^0_{k_0}=y^1_0, y^1_1,\cdots, y^1_{k_1}=y^2_0, y^2_1, \cdots y^2_{k_2}=y^3_0,\cdots. \]
	
	It is clear, then, that $\seq{x_i}$ is an asymptotic pseudo-orbit, and for each $N\in\omega$, we have $Z'\subseteq\{x_i:i\geq N\}\subseteq\overline{\{x_i:i\geq N\}}\subseteq Z$, and so $\omega\seq{x_i}=Z$.
	\end{proof}

It is worth noting that there are non-metric notions of shadowing and internal chain transitivity \cite{good-meddaugh}, and in these contexts the preceding lemma and proof can easily be modified to accommodate systems which are second countable.

We are now able to prove that in subshifts of bounded type, every closed ICT set is indeed an $\omega$-limit set. In fact, we are able to show that it is an $\omega$-limit set in a particularly tame sense.

\begin{theorem} \label{SBT-ICT}
	Let $\Gamma$ be a subshift of $\Lambda^\omega$ with $\Lambda$ countable. If $\Gamma$ is an SBT and $Z\subseteq \Gamma$ is a closed ICT set, then there exists $x\in\Gamma$ with $Z=\omega(x)$. Additionally, $x$ can be chosen such that for all $\epsilon>0$, there exists $N\in\omega$ such that $d(\sigma^i(x),Z)<\epsilon$ for all $i\geq N$.
\end{theorem}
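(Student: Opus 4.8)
The plan is to assemble the three preparatory results of this section into a direct argument, with asymptotic shadowing as the crucial ingredient. First I would record the topological hypothesis needed to apply Lemma \ref{ICTAsymptotic}: since $\Lambda$ is countable, the collection of cylinder sets centered at $0$ is countable and forms a basis for the topology, so $\Lambda^\omega$ is second countable and hence separable, and the subspace $Z$ inherits this. This places us in position to apply Lemma \ref{ICTAsymptotic} to the closed ICT set $Z$, producing an asymptotic pseudo-orbit $\seq{x_i}$ whose entries all lie in $Z$ and which satisfies $\omega\seq{x_i}=Z$.

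Next I would invoke Corollary \ref{SBTAsymptotic}: because $\Gamma$ is an SBT, $(\Gamma,\sigma)$ has the asymptotic shadowing property. Applying this to the asymptotic pseudo-orbit $\seq{x_i}$ yields a point $x\in\Gamma$ whose orbit $\seq{\sigma^i(x)}$ asymptotically shadows $\seq{x_i}$. By Lemma \ref{omegaequalsomega}, asymptotic shadowing preserves $\omega$-limit sets of sequences, so $\omega(x)=\omega\seq{\sigma^i(x)}=\omega\seq{x_i}=Z$, which establishes the first assertion.

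For the additional attracting conclusion, the key observation is that the asymptotic pseudo-orbit produced by Lemma \ref{ICTAsymptotic} lies entirely inside $Z$. Since $\seq{\sigma^i(x)}$ asymptotically shadows $\seq{x_i}$, we have $d(\sigma^i(x),x_i)\to 0$, and because each $x_i\in Z$ it follows that $d(\sigma^i(x),Z)\leq d(\sigma^i(x),x_i)\to 0$. Thus for each $\epsilon>0$ there is some $N\in\omega$ with $d(\sigma^i(x),Z)<\epsilon$ for all $i\geq N$, exactly as required.

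There is no serious obstacle remaining once the machinery of the preceding section is available; the substantive work has been front-loaded into Lemma \ref{ICTAsymptotic} and Corollary \ref{SBTAsymptotic}. The point worth emphasizing is why \emph{asymptotic} shadowing, rather than ordinary shadowing, is the right tool here: ordinary $\epsilon$-shadowing of $\seq{x_i}$ would only keep the orbit of $x$ within a fixed distance $\epsilon$ of $Z$ and could in principle enlarge the $\omega$-limit set, whereas asymptotic shadowing forces the exact equality $\omega(x)=Z$ together with the convergence of the orbit to $Z$. The availability of asymptotic shadowing for subshifts of bounded type, supplied via Theorem \ref{ShadImpliesAsymptShad}, is precisely what makes this tame realization possible.
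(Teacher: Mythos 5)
Your proposal is correct and follows the paper's own proof essentially verbatim: separability from countability of $\Lambda$, Lemma \ref{ICTAsymptotic} to produce an asymptotic pseudo-orbit in $Z$, Corollary \ref{SBTAsymptotic} plus Lemma \ref{omegaequalsomega} to get $\omega(x)=Z$, and the observation that the pseudo-orbit lies in $Z$ to obtain the attracting conclusion. No differences worth noting.
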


\begin{proof}
	
	Let $\Lambda$ be countable and let $\Gamma$ be a subshift of bounded type.
	
	Let $Z\subseteq\Gamma$ be a closed ICT set. Since $\Lambda$ is countable, $\Lambda^\omega$ is separable metric and thus by Lemma \ref{ICTAsymptotic}, there exists an asymptotic pseudo-orbit $\seq{z_i}$ in $Z$ with $\omega\seq{z_i}=Z$. By Corollary \ref{SBTAsymptotic}, the system $(\Gamma,\sigma)$ has asymptotic shadowing, so we can find $x\in \Gamma$ which asymptotically shadows $\seq{z_i}$, and hence by Lemma \ref{omegaequalsomega}, $\omega(x)=\omega\seq{z_i}=Z$.	
	
	Now, fix $\epsilon>0$. Since $\seq{z_i}$ is contained in $Z$, and is asymptotically shadowed by $\seq{\sigma^i(x)}$, there exists $N\in\omega$ such that for $i\geq N$, we have $d(z_i,\sigma^i(x))<\epsilon$. In particular, we have $d(\sigma^i(x),Z)<\epsilon$ for all $i\geq N$.
%
%
\end{proof}

With this in mind, we say that a subset $Z$ of $\Lambda^\omega$ is an \emph{attracting $\omega$-limit set} if there is a point $x\in\Lambda^\omega$ such that $\omega(x)=Z$ and for all $\epsilon>0$, there exists $N\in\omega$ with $d(\sigma^i(x),Z)<\epsilon$ for all $i\geq N$.

\begin{corollary}
	Let $\Gamma$ be a subshift of $\Lambda^\omega$ with $\Lambda$ countable. If $\Gamma$ is an SBT, then a set $Z\subseteq\Gamma$ is a closed ICT set if and only if it is an attracting $\omega$-limit set.
\end{corollary}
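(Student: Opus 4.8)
The statement is a biconditional, and the forward implication is essentially already in hand. If $Z\subseteq\Gamma$ is a closed ICT set, then since $\Gamma$ is an SBT over a countable alphabet, Theorem~\ref{SBT-ICT} produces a point $x\in\Gamma$ with $\omega(x)=Z$ and with the property that for every $\epsilon>0$ there is $N\in\omega$ with $d(\sigma^i(x),Z)<\epsilon$ for all $i\geq N$. This is precisely the assertion that $Z$ is an attracting $\omega$-limit set, so for this direction the plan is simply to quote Theorem~\ref{SBT-ICT}. All of the genuine work lies in the reverse implication.

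For the reverse implication, I would suppose $Z$ is an attracting $\omega$-limit set, witnessed by a point $x$ with $\omega(x)=Z$ together with the attracting condition above. That $Z$ is closed is automatic, since $Z=\omega(x)=\bigcap_{N}\overline{\{\sigma^i(x):i\geq N\}}$ is an intersection of closed sets. The substance is to show $Z$ is ICT, and here the plan is a \emph{projection} argument: fixing $a,b\in Z$ and $\delta>0$, I would take an orbit segment of $x$ running from near $a$ to near $b$ (which exists since $a,b\in\omega(x)$) and replace each orbit point by a nearby point of $Z$ (which exists since the attracting condition forces the tail of the orbit to lie within any prescribed distance of $Z$). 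Uniform continuity of $\sigma$ then guarantees that the resulting sequence in $Z$ is a genuine $\delta$-chain.

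In detail, I would proceed as follows. Recalling that $\sigma$ satisfies $d(\sigma(u),\sigma(v))\leq 2\,d(u,v)$, fix $\epsilon>0$ with $3\epsilon<\delta$ and apply the attracting condition to this $\epsilon$ to obtain $N\in\omega$ with $d(\sigma^i(x),Z)<\epsilon$ for all $i\geq N$. Since $a,b\in\omega(x)$, the forward orbit meets the $\epsilon$-balls about $a$ and $b$ infinitely often, so one may choose $N\leq m<n$ with $d(\sigma^m(x),a)<\epsilon$ and $d(\sigma^n(x),b)<\epsilon$. For each $i$ with $m\leq i\leq n$, select $w_i\in Z$ with $d(\sigma^i(x),w_i)<\epsilon$, taking $w_m=a$ and $w_n=b$ (both are legitimate choices). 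The triangle inequality and the Lipschitz bound then give $d(\sigma(w_i),w_{i+1})\leq d(\sigma(w_i),\sigma^{i+1}(x))+d(\sigma^{i+1}(x),w_{i+1})<2\epsilon+\epsilon<\delta$ for $m\leq i<n$, so $w_m,w_{m+1},\dots,w_n$ is a $\delta$-chain from $a$ to $b$ lying entirely in $Z$. As $a,b$ and $\delta$ were arbitrary, $Z$ is ICT.

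The only delicate point, and the step I expect to be the true crux, is ensuring the projected sequence is an honest $\delta$-chain at \emph{every} index $m\leq i<n$; this is exactly where both hypotheses on $x$ enter, the attracting condition guaranteeing that a projection $w_i\in Z$ exists for each term of the orbit segment, and the uniform continuity of $\sigma$ converting $\epsilon$-closeness of $w_i$ to $\sigma^i(x)$ into $\delta$-smallness of the chain gaps. It is worth remarking that this reverse direction uses neither the SBT hypothesis nor countability of $\Lambda$: the attracting condition is precisely the ingredient that substitutes for compactness in the classical argument of Hirsch, Smith and Zhao that $\omega$-limit sets are internally chain transitive, and indeed Remark~\ref{counterexample} shows that without it the conclusion can fail.
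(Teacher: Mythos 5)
Your proposal is correct and follows essentially the same route as the paper: the forward direction is an immediate citation of Theorem~\ref{SBT-ICT}, and the reverse direction is the same projection argument, replacing an orbit segment of the witnessing point by nearby points of $Z$ and using (uniform) continuity of $\sigma$ to verify the $\delta$-chain condition. The only cosmetic difference is that you invoke the explicit Lipschitz bound $d(\sigma(u),\sigma(v))\leq 2\,d(u,v)$ where the paper uses an abstract modulus of uniform continuity; both yield the same estimate.
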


\begin{proof}
	By the previous Theorem, for each closed ICT set $Z$, there is necessarily such a point $x\in\Gamma$.
	
	Now, suppose that $Z$ is an attracting $\omega$-limit set and let $x\in\Gamma$ witness this. Fix $\delta>0$ and $a,b\in Z$. Since $\sigma$ is uniformly continuous, fix $\eta>0$ such that $\eta<\delta/2$ and so that if $d(x,y)<\eta$, then $d(\sigma(x),\sigma(y))<\delta/2$.
	
	Since $x$ witnesses that $Z$ is an attracting $\omega$-limit set, find $N$ such that for $i>n$, $d(\sigma^i(x),Z)<\eta$. Now, fix $m>N$ and $n>m$ so that $d(\sigma^m(x),a)<\eta)$ and $d(\sigma^n(x),b)<\eta$. We now construct our $\delta$-chain from $a$ to $b$ in $Z$ by choosing $x_0=a$, $x_{n-m}=b$ and $x_i\in B_\eta(\sigma^{m+i}(x))\cap Z$ for $0<i<n-m$. By choice of $\eta$, for $i<n-m$, we have $d(\sigma^{m+i+1}(x),\sigma(x_i))<\delta/2$, and hence
	\[d(x_{i+1},\sigma(x_i))\leq d(x_{i+1},\sigma^{m+i+1}(x))+d(\sigma^{m+i+1}(x),\sigma(x_i))<\eta+\delta/2<\delta.\]
	Thus $x_0,x_1,\ldots x_{n-m}$ is a $\delta$-chain in $Z$ from $a$ to $b$, and thus $Z$ is ICT. As it is an $\omega$-limit set, it is also closed as desired.
\end{proof}

Having established this result for subshifts of bounded type, we turn our attention to subshifts of \emph{finite} type.

\begin{lemma} \label{FullshiftICT}
	Let $\Gamma$ be a subshift of $\Lambda^\omega$, with $\Lambda$ infinite (countably or uncountably so). If $\Gamma$ is an SFT, then $\Gamma$ is ICT.
\end{lemma}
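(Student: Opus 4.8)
The plan is to exploit the mismatch between the \emph{finitely many} symbols occurring in a forbidden-word basis and the \emph{infinitely many} symbols available in $\Lambda$, producing a universal ``bridging'' symbol through which any two points can be routed. Fix a finite basis $\C F(\Gamma)$, let $L$ be the maximum length of a word in $\C F(\Gamma)$, and let $S$ be the (finite) set of all symbols that occur in words of $\C F(\Gamma)$. Since $\Lambda$ is infinite, I can choose a symbol $c\in\Lambda\setminus S$. No basis word uses $c$, so any word containing the letter $c$ automatically fails to be a basis word; this is the single property driving the whole argument, and it is exactly where infiniteness of $\Lambda$ is needed.

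The first step is a gluing claim: if $u,v\in B(\Gamma)$, then $u\,c^L\,v\in B(\Gamma)$. To prove it, I would check that no subword $p$ of $u\,c^L\,v$ of length at most $L$ is a basis word (this suffices, since every basis word has length at most $L$). Because the inserted block $c^L$ has length $L$ and $|p|\le L$, such a $p$ cannot span the whole block together with symbols on both sides, so it meets at most one of the two junctions. Consequently $p$ either lies entirely inside $u$ or inside $v$, where it is a subword of an allowed word and hence not a basis word, or else it touches the $c$-block and therefore contains the letter $c$ and is not a basis word. Thus $u\,c^L\,v$ contains no basis word and is allowed.

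Now fix $a,b\in\Gamma$ and $\delta>0$, choose $M$ with $2^{-M}<\delta$, and pick $T\ge M+1$. I would verify that
\[\Omega = a_0 a_1\cdots a_{T-1}\,c^L\,b_0 b_1 b_2\cdots \in \Gamma.\]
Indeed, every prefix of $\Omega$ is a prefix of some $a_{[0,T)}\,c^L\,b_{[0,k)}$, which is allowed by the gluing claim, and prefixes of allowed words are allowed; since $\Gamma$ is closed and $B(\Gamma)$ equals the set of initial segments, a point all of whose prefixes are allowed lies in $\Gamma$. Observe that $\sigma^{T+L}(\Omega)=b$. I then take $x_0=a$ and $x_i=\sigma^i(\Omega)$ for $1\le i\le T+L$, so $x_{T+L}=b$ and each $x_i\in\Gamma$ by $\sigma$-invariance. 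For $1\le i<T+L$ the step is exact, $\sigma(x_i)=x_{i+1}$, so $d(\sigma(x_i),x_{i+1})=0<\delta$. For the initial step, $\sigma(a)$ and $\sigma(\Omega)=x_1$ agree on their first $T-1\ge M$ coordinates (both read $a_1\cdots a_{T-1}$ there), so $d(\sigma(a),x_1)\le 2^{-M}<\delta$. Hence $x_0,x_1,\ldots,x_{T+L}$ is a $\delta$-chain in $\Gamma$ from $a$ to $b$, and since $a,b,\delta$ were arbitrary, $\Gamma$ is ICT.

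The only genuine obstacle is conceptual: recognizing that infiniteness of $\Lambda$ forces a free symbol $c\notin S$ and isolating the gluing claim that turns $c$ into a bridge between arbitrary allowed words. Everything afterward---assembling $\Omega$ and reading the chain off its shift orbit---is routine bookkeeping about the metric and the shift. It is worth flagging that the argument breaks for finite $\Lambda$ precisely because no such free symbol need exist, consistent with the fact that finite-alphabet SFTs need not be ICT.
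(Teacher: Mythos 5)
Your proof is correct and takes essentially the same route as the paper: pick a symbol $c$ absent from the finite forbidden-word basis, use it to glue an initial segment of $a$ to $b$, and read the $\delta$-chain off the shift orbit of the glued point. The only cosmetic difference is that you insert a block $c^L$ where the paper inserts a single $c$ (which already suffices, since any subword straddling the junction contains $c$ and so cannot be a basis word).
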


\begin{proof}
	Let $\Lambda$ be an infinite set and $\Gamma$ a subshift of $\Lambda^\omega$ which is an SFT. Let $\mathcal F(\Gamma)$ be a finite basis for the forbidden words of $\Gamma$, and let $c\in\Lambda$ such that $c$ does not appear as a symbol in any word in $\mathcal F(\Gamma)$.

	Let $x=\seq{x_i}$ and $y=\seq{y_i}$ belong to $\Gamma$, and let $\delta>0$. Additionally fix $N\in\omega$ such that if $a,b\in \Gamma$ and $a_{[0,N)}=b_{[0,N)}$, then $d(a,b)<\delta$.
	
	Define $z=x_{[1,N+1)}cy\in\Lambda^\omega$, notice that each subword of $z$ is either a subword of $x$, of $y$, or contains the symbol $c$, and as such does not belong to $\mathcal F(\Gamma)$, and so $z\in\Gamma$.
	
	Furthermore,  notice that \[x, z, \sigma(z),\ldots \sigma^{N}(z)=cy,\sigma^{N+1}(z)=y\] is a $\delta$-chain from $x$ to $y$ in $\Gamma$. As $x,y\in\Gamma$ and $\delta>0$ were arbitrary, we see that $\Gamma$ is ICT.
\end{proof}

This result, combined with Theorem \ref{SBT-ICT} yields the following.

\begin{corollary} \label{countableSFT}
	Let $\Gamma$ be a subshift of $\Lambda^\omega$ with $\Lambda$ countably infinite. If $\Gamma$ is an SFT, then there exists $x\in\Gamma$ with  $\omega(x)=\Gamma$.
\end{corollary}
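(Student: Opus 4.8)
The plan is to recognize this as a direct assembly of the two immediately preceding results, applied with $Z=\Gamma$ itself. The only genuine content is checking that $\Gamma$ qualifies as a closed ICT subset of itself, after which Theorem \ref{SBT-ICT} does all the work. First I would observe that, since $\Gamma$ is an SFT, the Proposition guarantees that $\Gamma$ is also an SBT; this is exactly what permits the invocation of Theorem \ref{SBT-ICT}, whose hypotheses require a subshift of bounded type over a countable alphabet. Both conditions hold here, as $\Lambda$ is countably infinite.

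Next I would take $Z=\Gamma$ and verify that it is a closed ICT subset of $\Gamma$. Closedness is immediate, since $\Gamma$ is a subshift and hence closed by definition. For internal chain transitivity I would appeal to Lemma \ref{FullshiftICT}: because $\Lambda$ is infinite and $\Gamma$ is an SFT, that lemma yields precisely that $\Gamma$ is ICT. It is worth emphasizing that the infinitude of $\Lambda$ is essential at this point, entering through the existence of a symbol $c$ that avoids the finite forbidden basis $\C F(\Gamma)$; this symbol is what lets any two points of $\Gamma$ be spliced into an allowed word and thereby connected by a $\delta$-chain.

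Finally, with $Z=\Gamma$ a closed ICT subset of the SBT $\Gamma$, Theorem \ref{SBT-ICT} produces a point $x\in\Gamma$ with $\omega(x)=\Gamma$, which is the desired conclusion. I do not anticipate any real obstacle: the substantive work has already been carried out in Lemma \ref{FullshiftICT} (supplying the ICT hypothesis) and Theorem \ref{SBT-ICT} (converting a closed ICT set into an $\omega$-limit set), so this corollary merely specializes the latter to the case $Z=\Gamma$. The closest thing to a subtlety is simply confirming that the countably-infinite alphabet simultaneously satisfies the "countable" requirement of Theorem \ref{SBT-ICT} and the "infinite" requirement of Lemma \ref{FullshiftICT}, which it does.
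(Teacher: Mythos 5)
Your proposal is correct and matches the paper's argument exactly: the paper derives this corollary by combining Lemma \ref{FullshiftICT} (which gives that $\Gamma$ is ICT) with Theorem \ref{SBT-ICT} applied to $Z=\Gamma$, using that an SFT is an SBT. No further commentary is needed.
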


In fact, this is a property unique to countably infinite alphabets.

\begin{corollary} \label{dichotomy}
	Let $\Lambda$ be set containing more than one element. Then $\Lambda$ is countably infinite if and only if every subshift $\Gamma$ of $\Lambda^\omega$ which is an SFT has a point $z\in\Gamma$ with $\omega(z)=\Gamma$.
\end{corollary}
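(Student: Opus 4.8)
The statement is a biconditional, and the forward implication is already in hand: if $\Lambda$ is countably infinite, then Corollary~\ref{countableSFT} gives, for every SFT subshift $\Gamma\subseteq\Lambda^\omega$, a point $z\in\Gamma$ with $\omega(z)=\Gamma$. So the work is entirely in the converse, which I would prove by contraposition. Assuming $\Lambda$ has more than one element but is \emph{not} countably infinite, I would exhibit a single SFT subshift $\Gamma$ of $\Lambda^\omega$ admitting no point $z\in\Gamma$ with $\omega(z)=\Gamma$. Since $\Lambda$ is not countably infinite, it is either finite (with at least two elements) or uncountable, and I would treat these as separate cases with genuinely different counterexamples.

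For the finite case, I would fix distinct symbols $a,b\in\Lambda$ and take $\Gamma=\{a^\omega,b^\omega\}$. This set is closed and $\sigma$-invariant (both points are fixed), and it is an SFT: a word is forbidden precisely when it contains two distinct symbols or a symbol outside $\{a,b\}$, so the finite set $\{ab,ba\}\cup(\Lambda\setminus\{a,b\})$ is a basis for $F(\Gamma)$, finite because $\Lambda$ is. Since every point of $\Gamma$ is a fixed point of $\sigma$, for either choice $z\in\Gamma$ one has $\omega(z)=\{z\}$, a singleton, and hence $\omega(z)\neq\Gamma$.

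For the uncountable case, I would take the full shift $\Gamma=\Lambda^\omega$, which is an SFT with empty (hence finite) basis of forbidden words, and invoke a separability obstruction. For any $z\in\Lambda^\omega$, the forward orbit $\{\sigma^i(z):i\in\omega\}$ is countable, so its closure, and therefore the subset $\omega(z)$, is a separable metric space. But $\Lambda^\omega$ is not separable, since the cylinder sets $\{x:x_0=a\}$ for $a\in\Lambda$ form an uncountable family of pairwise disjoint nonempty open sets, of which no countable set could meet all. Consequently $\omega(z)\neq\Lambda^\omega=\Gamma$ for every $z$, which finishes the case and the proof.

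The main subtlety I would flag is that the two failing cases are obstructed for completely different reasons, so no single construction covers both. In particular the full shift cannot serve as the finite-alphabet counterexample: when $\Lambda$ is finite, $\Lambda^\omega$ is compact and possesses transitive points $z$ with $\omega(z)=\Lambda^\omega$, so one is forced to a dynamically trivial (non-internally-chain-transitive) SFT such as the two-fixed-point system. Recognizing this asymmetry, namely that separability fails in the uncountable case while chain transitivity (here, simply nontriviality of the dynamics) fails in the finite case, is the heart of the argument.
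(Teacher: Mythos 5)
Your proof is correct, and it follows the same overall architecture as the paper's: the forward implication is delegated to Corollary~\ref{countableSFT}, and the converse is handled by splitting the failure of countable infinitude into the finite and uncountable cases, each with its own counterexample. The uncountable case is essentially identical in substance -- the paper notes directly that $\omega(z)\subseteq\{z_i:i\in\omega\}^\omega\subsetneq\Lambda^\omega$ since only countably many symbols occur in $z$, while you repackage the same countability obstruction as a separability argument; both are fine, and the paper's phrasing is marginally more concrete. The genuine difference is in the finite case. The paper forbids only $s_0s_1$ (together with all other letters), obtaining the subshift $\{s_1^ks_0^\omega:k\in\omega\}\cup\{s_1^\omega\}$, shows it is not internally chain transitive, and then invokes the theorem of \cite{BGKR-omega} that $\omega$-limit sets of compact systems are internally chain transitive. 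You instead take the two-point subshift $\{a^\omega,b^\omega\}$ and observe that every point of it is fixed, so every $\omega$-limit set of a point of $\Gamma$ is a singleton. Your argument is self-contained and more elementary, avoiding any appeal to the ICT machinery; the paper's choice has the expository advantage of tying the finite-alphabet failure explicitly to the chain-transitivity theme that organizes the rest of the section. Your closing remark correctly identifies why no single construction works for both cases, which is indeed the point of the dichotomy.
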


\begin{proof}
	By Corollary \ref{countableSFT}, if $\Lambda$ is countably infinite, then the property that every subshift of finite type is an $\omega$-limit set holds.
	
	If $\Lambda$ is finite and contains the symbols $s_0,s_1$, consider the finite set $\mathcal F=\{s_0s_1,\}\cup\left(\Lambda\setminus\{s_0,s_1\}\right)$, and let $\Gamma$ be the subshift of finite type with $\mathcal F$ as a basis for its forbidden words. Clearly $\Gamma$ is an SFT; but it is not ICT, as there can be no $\delta$-chain from $s_0^\infty$ to $s_1^\infty$ for $\delta<1$. As such, by the results of \cite{BGKR-omega}, it is not an $\omega$-limit set.
	
	If $\Lambda$ is uncountable, then the full shift $\Lambda^\omega$ is an SFT, and by Lemma \ref{FullshiftICT} is ICT as well. However, if $x=\seq{x_i}\in\Lambda^\omega$, then $\omega(x)\subseteq\{x_i:i\in\omega\}^\omega\subsetneq\Lambda^\omega$, and thus $\Lambda^\omega$ is not an $\omega$-limit set.
\end{proof}

It is worth noting that there exist subshifts $\Gamma$ of bounded type over countable alphabets which are not internally chain transitive, and indeed, for which $\Gamma$ is not an $\omega$-limit set.

\begin{remark} \label{SBTnotOmega}
	Consider the subshift $\Gamma$ of $\omega^\omega$ with basis of forbidden words given by $\mathcal F(\Gamma)=\{st:s>t\}$.
	
	For $\delta<1$, let $x_0,x_1,\ldots x_n$ be a $\delta$-chain in $\Gamma$. Let $a\in\omega$ be the initial symbol of $x_0$. Then for all $i\leq n$, if $b$ is a symbol of $x_i$, then $b\leq a$. In particular, there is no $\delta$-chain from $x_0$ to the point $(a+1)^\infty$.
	
	To see that $\Gamma$ is not an $\omega$-limit set, observe that for $x\in \Gamma$ with initial symbol $a$, the forward orbit $\{\sigma^n(x)\}$ of $x$ is contained in $\{0,1,\ldots a\}^\omega\subsetneq\Gamma$.
\end{remark}

It is interesting to note that the example from Remark \ref{counterexample}, while not internally chain transitive, is in fact, a union of closed internally chain transitive sets. While this might lead one to conjecture that in shifts over infinite alphabets, every $\omega$-limit set is a union of closed internally chain transitive sets, we observe that this is false, as the following example demonstrates.

\begin{remark}
	Consider the full shift with alphabet $\omega$. The point \[x=0120^21^230^31^340^41^450^51^560^61^670^71^780^81^8\cdots\]
	has $\omega(x)=\{1^\omega,0^\omega\}\cup\{0^k1^\omega: k\in\omega\}$, which is closed and invariant, but is not the union of closed internally chain transitive sets.
\end{remark}

This example demonstrates how significantly the dynamics of shift spaces differ between the finite and infinite alphabet cases. In fact, for subshifts of finite type over infinite alphabet, \emph{every} closed invariant set is an $\omega$-limit set. Note that this is false for subshifts of bounded type, as witnessed in Remark \ref{SBTnotOmega}.

\begin{theorem}
	Let $\Gamma$ be a subshift of $\Lambda^\omega$ with $\Lambda$ countable. If $\Gamma$ is an SFT, then $Z\subseteq \Gamma$ is a closed, invariant set if and only if it is an $\omega$-limit set.
\end{theorem}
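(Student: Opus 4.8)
The forward implication is routine: any $\omega$-limit set $\omega(x)$ is an intersection of closed sets, hence closed, and satisfies $\sigma(\omega(x))\subseteq\omega(x)$ since $w=\lim_j\sigma^{k_j}(x)$ forces $\sigma(w)=\lim_j\sigma^{k_j+1}(x)\in\omega(x)$. So I would put all of the effort into the reverse implication: given a closed invariant $Z\subseteq\Gamma$, produce $x\in\Gamma$ with $\omega(x)=Z$. The plan is to concatenate longer and longer initial segments of a dense sequence in $Z$, gluing consecutive blocks together with single \emph{bridge} symbols. The crucial twist --- already visible in Remark~\ref{counterexample}, where the bridges $2,3,4,\dots$ are all distinct --- is that the bridge symbols must be pairwise distinct. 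Since $\mathcal F(\Gamma)$ is finite, only finitely many letters occur in forbidden words, so (as $\Lambda$ is infinite) I may fix an infinite sequence $c_1,c_2,\dots$ of pairwise distinct letters, none of which appears in any word of $\mathcal F(\Gamma)$; just as in Lemma~\ref{FullshiftICT}, inserting such a letter between two allowed words yields an allowed word.

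Concretely, fix a countable dense subset of $Z$ and enumerate it as $\langle d_n\rangle$ so that each of its members occurs infinitely often, and pick lengths $L_n\to\infty$. I would then set
\[x=(d_0)_{[0,L_0)}\,c_1\,(d_1)_{[0,L_1)}\,c_2\,(d_2)_{[0,L_2)}\,c_3\cdots.\]
That $x\in\Gamma$ follows because every subword of $x$ either lies inside some block $(d_n)_{[0,L_n)}\in B(\Gamma)$ or contains one of the free letters $c_m$, and hence avoids $\mathcal F(\Gamma)$. For the inclusion $Z\subseteq\omega(x)$, given $z\in Z$ and $N\in\omega$, density provides a member of the dense set agreeing with $z$ on its first $N$ coordinates; since that member equals $d_n$ for infinitely many $n$ and $L_n\to\infty$, the orbit point at the start of such a block agrees with $z$ on the first $N$ coordinates, so $d(\sigma^{k}(x),z)<2^{-N}$ for infinitely many $k$, whence $z\in\omega(x)$.

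The substantive step, and the one I expect to be the main obstacle, is the reverse inclusion $\omega(x)\subseteq Z$. Here I would take an accumulation point $w=\lim_j\sigma^{k_j}(x)$ with $k_j\to\infty$ and record, for each $j$, the offset $q_j\ge 0$ of the first bridge symbol at or after position $k_j$; writing $n_j$ for the block containing $k_j$ and $p_j$ for the offset of $k_j$ within that block, note that $k_j\to\infty$ forces $n_j\to\infty$. If $q_j\to\infty$ then $\sigma^{k_j}(x)$ agrees on its first $q_j$ coordinates with the point $\sigma^{p_j}(d_{n_j})$, which lies in $Z$ by invariance; letting $j\to\infty$ shows $w\in\overline Z=Z$. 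If instead $q_j$ fails to tend to infinity, then along a subsequence some fixed coordinate of $\sigma^{k_j}(x)$ equals the bridge symbol $c_{n_j+1}$, and since the $n_j$ are eventually distinct these letters are eventually distinct, contradicting convergence at that coordinate. Thus only the first case occurs and $w\in Z$. This is exactly the point at which distinctness of the bridge letters is indispensable: a single repeated bridge letter $c$ would instead force the spurious accumulation set $\{cz:z\in Z\}\subseteq\omega(x)$, which need not lie in $Z$. Combining the two inclusions gives $\omega(x)=Z$, completing the proof.
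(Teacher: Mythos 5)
Your proposal is correct and follows essentially the same route as the paper: concatenate allowed blocks approximating $Z$ (the paper enumerates all initial segments of points of $Z$, you take growing prefixes of a dense sequence) separated by pairwise distinct fresh symbols avoiding the finite forbidden basis, with the distinctness of the separators doing exactly the same work in ruling out spurious points of $\omega(x)$. The only cosmetic difference is that the paper verifies $\omega(x)\subseteq Z$ combinatorially on finite subwords while you argue directly on convergent subsequences of the orbit.
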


\begin{proof}	
	Let $\Lambda$ be an infinite set and $\Gamma$ a subshift of $\Lambda^\omega$ which is an SFT. Let $\mathcal F(\Gamma)$ be a finite basis for the forbidden words of $\Gamma$, and let $\{s_i:i\in\omega\}$ be a countably infinite subset of $\Lambda$ such that for each $i$, the symbol $s_i$ does not appear in any word of $\mathcal F(\Gamma)$.
	
	Now, let $Z$ be a closed, invariant subset of $\Gamma$, and let $B=\seq{b_i}_{i\in\omega}$ be an enumeration of the initial segments of points in $Z$, i.e. for each $w\in Z$ and $n>0$, there exists $i\in\omega$ such that the word $w_{[0,n)}$ is equal to $b_i$. Note that, since $Z$ is invariant, for any $w\in Z$, \emph{every} subword of $w$ is an initial segment of a point in $Z$, and as such $B$ is also an enumeration of the subwords of points in $Z$. It follows that if $b$ is a subword of $b_i$ for some $i\in\omega$, then there exists $j\in\omega$ with $b=b_j$.
	
	Define a point $x\in\Lambda^\omega$ as follows.
	\[x=b_0s_0b_1s_1\cdots b_is_i\cdots\]
	We claim that $x\in\Gamma$ and $\omega(x)=Z$. For the former, let $w$ be a subword of $x$. Then $w$ is either a subword of $b_i$ for some $i\in\omega$ or else contains $s_j$ for some $j\in\omega$. In either case, $w\in\mathcal F(\Gamma)$, and so $x\in\Gamma$.
	
	To see that $\omega(x)=Z$, let $z\in Z$. For each $m>0$, there exists $i(m)\in\omega$ with $z_{[0,m)}=b_{i(m)}$. In particular, for each $n>0$, the initial segment $z_{[0,n)}$ of $z$ is a subword of $b_{i(m)}$ for all $m\geq n$, and as such occurs in $x$ infinitely often. It follows that $z\in \omega(x)$.
	
	Now, consider $p\in\omega(x)$, and let $n>0$. Then $p_{[0,n)}$ occurs in $x$ infinitely often. As $p_{[0,n)}$ is finite, there is $N\in\omega$ such that it does not contain the symbol $s_i$ for any $i\geq N$. In particular, $p_{[0,n)}$ occurs infinitely often in
	\[b_Ns_Nb_{N+1}s_{N+1}\cdots b_{N+i}s_{N+i}\cdots\]
	and, since it does not contain any of the symbols $s_{N+i}$ for $i\geq 0$, we see that $p_{[0,n)}$ is a subword of $b_{N+i}$ for infinitely many $i$, and in particular, for at least one $i$. As such, $p_{[0,n)}$ is the initial segment of some point in $Z$. Thus, every neighborhood of $p$ meets $Z$, and since $Z$ is closed, $p\in Z$. Thus $Z=\omega(x)$.
	
	For the converse, observe that if $Z$ is an $\omega$-limit set, it is necessarily closed and invariant.
\end{proof}

Thus, in the context of subshifts of finite type with countable alphabet, the $\omega$-limit sets and the closed invariant sets coincide. Since subshifts of finite type are also subshifts of bounded type, we also see that the attracting $\omega$-limit sets and the closed ICT sets coincide.

\bibliographystyle{plain}
\bibliography{ComprehensiveBib}

%
%

\end{document}